\newtheorem{theorem}{Theorem}
\newtheorem{definition}{Definition}
\newtheorem{lemma}{Lemma}
\newcommand{\be}{\begin{enumerate}}
\newcommand{\ee}{\end{enumerate}}
\newcommand{\beq}{\begin{equation}}
\newcommand{\eeq}{\end{equation}}
\def\N{{\mathbb{N}}}
\def\Z{{\mathbb{Z}}}
\def\A{{\mathcal{A}}}
\def\M{{\mathcal{M}}}
\title{Undecidability of Equations in Free Lie  Algebras}
\author{Olga Kharlampovich \footnote{Hunter College, CUNY, Supported by  PSC-CUNY  award} and Alexei Myasnikov \footnote{Stevens Institute of Technology, the results of this paper were obtained with  support of RSF grant (project No. 14-11-00085)}}
\date{}
\begin{document}

\maketitle

\begin{abstract} In this paper we prove undecidability of finite systems of equations   in free Lie algebras of rank at least three over an arbitrary  field.  We show that the ring of integers $\mathbb{Z}$ is interpretable by positive existential formulas   in such free Lie algebras over a field of characteristic zero. \end{abstract}

\tableofcontents

\section{Introduction} 

In this paper we prove undecidability of finite systems of equations   in free Lie algebras $L$ of rank at least three over an arbitrary   field $K$. Furthermore, we show that the ring of integers $\mathbb{Z}$ is  interpretable by positive existential formulas  in such free Lie algebras over a field characteristic zero. We also prove that the field $K$ is interpretable by finite systems of equations in free Lie algebras $L$ of rank at least 2.  

Study of algebraic equations is one of the oldest and most celebrated themes in mathematics. It was understood that finite systems of equations with coefficients in $\mathbb C$ or $\mathbb R$ are decidable. The celebrated Hilbert tenth problem stated in 1900 asks for  a procedure which, in a finite number of steps, can determine whether a polynomial equation (in several variables) with integer coefficients has or does not have integer solutions.
In 1970  Matiyasevich, following the work of Davis, Putnam and Robinson, solved this problem in the  negative  \cite{mat}.
Similar  questions can be asked for arbitrary commutative rings $R$.  The {\em Diophantine problem for a given commutative ring $R$} asks if there exists  an algorithm that decides whether or not a given polynomial equation (a finite system of polynomial equations) with coefficients in some subring $R_0$ of  $R$  has a solution in $R$. In this case elements of $R_0$ must be recognizable by computers, so we always assume that $R_0$ is a computable ring.
One can consider equations and their decidability over arbitrary algebraic structures  $\mathcal{M}$  in a language $L$. 
An equation in  $\mathcal{M}$ is an equality of two terms in $L$:
$$
t(x_1, \ldots, x_n, a_1, \ldots, a_m) = s(x_1, \ldots, x_n, b_1, \ldots, b_k).
$$
with variables $x_1, \ldots,x_n$ and constants $a_1, \ldots, a_m, b_1, \ldots, b_k  \in \mathcal{M}$.  A solution of such an equation is a map  $x_i \to c_i$ from the set of variables into $\mathcal{M}$ which turns the symbolic equality of terms $t = s$ into a true   equality in $\mathcal{M}$. In particular,  one can consider equations in semigroups, groups,  associative or Lie algebras, etc. 
 
The  undecidability of the Diophantine problem was proved for rings of polynomials in one variable $R[X]$ over an integral domain $R$ \cite{denef,denef2},  for  rings of Laurent polynomials $R[X,X^{-1}]$  \cite{pappas,pheidas},  for free associative algebras over any field   and for   group algebras over any field and of  a wide variety  of  torsion free groups, including toral relatively hyperbolic groups, right angled Artin groups, commutative transitive groups, the  fundamental groups of various graph of groups,  etc. \cite{KM1}.

A major open problem  is the Diophantine problem (sometimes called  generalized  Hilbert's tenth problem) for the field ${\mathbb Q}$ of rational numbers (see  a comment on this  in the next section). A  survey of the results on the undecidability of existential theories of rings and fields is given in \cite{pheidas_zahidi}.

The following are the principal    questions on equations in  $\mathcal{M}$:
decidability of single equations and finite systems (Diophantine problems),
equivalence of  infinite systems of equations  in finitely many variables  to some of their  finite subsystems (equationally Noetherian structures),
description of solution sets of finite systems of equations.  The principal questions are solved positively in 
abelian groups (linear algebra), 
 free groups \cite{Makanin},\cite{Razborov}, \cite{irc},
hyperbolic  and toral relatively hyperbolic groups \cite{Rips-Sela},\cite {Dahmani-Groves},
Right angled Artin groups \cite{diek}, \cite{CRK} and free products of groups  with decidable  equations in the factors  (see also  \cite{CRK1}), and some other groups. Not much is known about description of solutions sets of finite systems of equations in free associative algebras, and whether such algebras are equationally Noetherian or not.

  As we mentioned above in this paper we prove the undecidability of equations   for free Lie algebras $L$ with basis $A = \{a,b,c, \ldots\}$  over an arbitrary  field $K$.  To do so we first study  solutions in $L$ of the linear system of equations $ [x,c]+[y,b]=[z,a], [x,b]=[z_1,a],  [y,c]=[z_2,a]$ in variables $x, y, z, z_1, z_2$ and constants $a,b,c \in L$. Then we describe precisely the Diophantine  set which is the projection of the solution set of the system onto the first two  coordinates $(x,y)$ and using this description we interpret by positive existential formulas the ring of polynomials $K[t]$ in one variable $t$. This, together with  Denef's  results \cite{denef,denef2} on undecidability of the Diophantine problems in $K[t]$, gives undecidability of the Diophantine problem in $L$. In fact, one can write down an infinite sequence of finite systems of equations   $S(X,A,u) = 0$ in variables $X$ and coefficients in $L$ where the coefficient $u$ runs over a particular  infinite set of elements in $L$ (the parameter of the system)  for which there is no algorithm to decide if a given system $S(X,A,u) = 0$ has a solution in $L$ or not. 
  
 Note that describing solution sets of equations in $L$ is a very difficult task.  Solution sets of linear equations of the type $[x,u]+ [y,v] = 0$ in variables $x,y$ and constants $u,v \in L$ were described in \cite{RS}, later these results were generalised to arbitrary linear equations of the type $[x_1,u_1]+ \ldots + [x_n,u_n] = 0$  (with some restrictions on the coefficients $u_1, \ldots,u_n \in L$ and sometimes requiring characteristic zero of the field $K$).  The general theory of equations in free Lie algebras is not developed yet, though the fundamentals  of the algebraic geometry over arbitrary Lie algebras were outlined in the works of Daniyarova,  Kazachkov, and Remeslennikov \cite{DKR1,DKR2,Kaz,DR}. In particular, in  the paper  \cite{DR} systems of equations over free Lie algebras whose solution sets are contained in a finite-dimensional affine subspace were studied. 
 
 We finish this introduction by stating  some open questions:
  
 \begin{enumerate}
\item [1)] Is a finite system of equations in $L$ equivalent to a single equation?   
\item [2)] Is $L$ equationally Noetherian?  
\item [3)]  Prove the undecidability of equations  for free Lie algebras of rank 2 over fields.
\end{enumerate}

\section{E-interpretability}

Recall that $A\subset \mathcal{M}^n$ is called {\em e-definable} (equationally definable) or {\em Diophantine}  in $\mathcal{M}$ if there exists a finite system of equations $\Sigma(x_1,\ldots,x_n,y_1, \ldots,y_m)$ such that 
$(a_1,\ldots,a_n) \in A$ if and only the system  $ \Sigma(a_1,\ldots,a_n,y_1, \ldots,y_m)$ in variables $y_1, \ldots,y_m$ has a solution in $\mathcal{M}$. In other words,  Diophantine sets are projections of algebraic sets defined by finite systems of equations.

Now we define an important notion of interpretations by equations.

\begin{definition}(E-interpretation or Diophantine interpretation)
Let $\mathcal{A}$ and $\mathcal{M}$ be algebraic structures. A map $\phi:X\subset \mathcal{M}^n \to \mathcal{A}$ is called an e-interpretation of $\mathcal{A}$ in  $\mathcal{M}$ if 
\begin{itemize}
\item [1)] $\phi$ is onto;
\item [2)] $X$ is e-definable in $\mathcal{M}$;
\item [3)] The preimage of $"="$ in $\mathcal{A}$ is e-definable in $\mathcal{M}$; 
\item [4)] The preimage of the graph of every function and predicate  in $\mathcal{A}$ is e-definable in $\mathcal{M}$.
\end{itemize}
\end{definition}
For algebraic structures $\mathcal A$ and $\mathcal M$ we write $\A \to_e \M$ if $\A$ is e-interpretable in $\M$. 

\medskip
\noindent
{\bf Examples.} {\it The following are known examples of e-interpretability: 
\begin{itemize}
\item [1)] $\N$ is Diophantine in $\Z$ since 
$$  x \in \mathbb{N} \Longleftrightarrow \exists y_1, \ldots, y_4 (x = y_1^2 + \ldots +y_4^2);$$ 
\item [2)] the ring $\mathbb Q$ is e-interpretable in $\mathbb Z$ as a field of fractions. 
\item [3)] $\mathbb Z$ is e-interpretable in $\mathbb N$;
\item [4)] The structure $\langle \mathbb Z ; +, \mid \rangle$, where $\mid$ is the predicate of division, is e-iterpretable in $\mathbb Z$.
\end{itemize}
}

The following result is easy, but useful.
\begin{lemma}(Transitivity of Diophantine interpretation) \label{le:transitive}
For algebraic structures  $A, B, C$  if $A \to_eB$ and $ B \to_e C$ then $A \to_e C$.
\end{lemma}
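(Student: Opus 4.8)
The plan is to unwind the definition of e-interpretation and compose the two interpreting maps, checking each of the four clauses in turn. Suppose $\phi : X \subseteq B^n \to A$ e-interprets $A$ in $B$, and $\psi : Y \subseteq C^m \to B$ e-interprets $B$ in $C$. Since $\psi$ is onto $B$, it induces a surjection $\psi^n : (Y)^n \subseteq C^{mn} \to B^n$ componentwise. I would set $Z = (\psi^n)^{-1}(X) \subseteq C^{mn}$ and define the composite $\chi = \phi \circ \psi^n : Z \to A$; this is clearly onto $A$, giving clause~1). The domain $X$ is cut out in $B^n$ by a finite system of equations with parameters in $B$; replacing each occurrence of a $B$-variable (and each $B$-parameter, using that singletons $\{b\}$ are e-definable in $C$ as $\psi^{-1}(b)$, or simply picking a fixed preimage tuple) by its block of $C$-variables through $\psi$, and conjoining $m$ copies of the defining system for $Y$, shows that $Z$ is e-definable in $C$ — clause~2). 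The only subtlety here is that "e-definable" must be closed under such substitutions and under finite conjunction of finite systems, which is immediate since a finite union of finite equation systems, after renaming variables apart, is again a finite system, and existential quantification corresponds exactly to taking projections.

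For clauses~3) and~4): the preimage under $\phi$ of equality in $A$ is a Diophantine subset $E \subseteq B^n \times B^n$, say defined by a finite system $\Sigma_=(\bar u, \bar v, \bar w)$ over $B$. Then $\chi(\bar c) = \chi(\bar c')$ in $A$ iff $(\psi^n(\bar c), \psi^n(\bar c')) \in E$ iff the pulled-back system $\Sigma_=$, with each $B$-slot expanded into its $C$-preimage block via $\psi$ and the $Y$-systems adjoined, is solvable in $C$; this is again a finite system over $C$, so the preimage of $"="$ under $\chi$ is e-definable in $C$. Exactly the same argument applies verbatim to the graph of each function symbol and each predicate symbol of $A$: start from its $\phi$-preimage, which is Diophantine in $B$ by hypothesis, and pull it back along $\psi$ in the identical manner. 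Hence $\chi$ is an e-interpretation of $A$ in $C$, i.e.\ $A \to_e C$.

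I do not expect a genuine obstacle here — the statement is, as the authors note, "easy". The one place to be careful is bookkeeping: one must consistently rename the blocks of $C$-variables introduced by the different copies of $\psi$ so that distinct $B$-coordinates get disjoint variable blocks, and one must verify once and for all the elementary closure properties of the class of Diophantine relations (closed under finite conjunction, under coordinate permutation and identification, and under projection), which make the substitution-and-conjoin step legitimate. With those routine facts in hand, the composition $\chi = \phi \circ \psi^n$ witnesses $A \to_e C$.
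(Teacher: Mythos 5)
The paper offers no proof of this lemma — it is stated with the remark ``The following result is easy, but useful'' and left as an exercise. Your composition argument ($\chi = \phi \circ \psi^n$ on $Z = (\psi^n)^{-1}(X)$, checking the four clauses of the definition by pulling back each $B$-Diophantine set along $\psi$) is the standard and expected proof, and it is correct. The one place where your sketch is slightly terse is the translation of a finite system of equations over $B$ into one over $C$: since the operations of $B$ are only available through their graphs being e-definable in $C$, one cannot literally substitute $C$-variable blocks into a $B$-term; rather, one must first flatten each $B$-equation by introducing a fresh variable for every subterm (reducing to atomic equations of the form $y = f(x_1,\dots,x_k)$ or $y = b$), and only then replace each atomic equation by the defining $C$-system for the corresponding graph. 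You gesture at this under ``bookkeeping,'' and the closure properties you invoke (finite conjunction after variable renaming, and projection) are exactly what make the flattened translation go through, so the proposal is sound.
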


The following result gives the main technical tool in our study of Diophantine problems.
\begin{lemma}\label{le:reduction}
  Let $\phi:X\subset \mathcal{M}^n\to \mathcal{A}$ be a Diophantine interpretation of $\mathcal{A}$ in $\mathcal{M}$. Then  there is an effective procedure that given   a finite system of equations $S = 1$ over $\mathcal{A}$ constructs an equivalent system of equations $S' = 1$ over $\mathcal{M}$, such that  $\bar a$ is a solution of $S = 1$ in $\mathcal{A}$ iff  $\phi^{-1}(\bar a)$ is a solution of $S' = 1$ in $\mathcal{M}$.
  \end{lemma}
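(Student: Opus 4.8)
The plan is to pull the system $S = 1$ back along the interpretation $\phi$: replace each variable of $S$ by a block of $n$ variables over $\mathcal{M}$, and replace each application of a function symbol of $\mathcal{A}$ — and the equality sign itself — by the corresponding Diophantine condition over $\mathcal{M}$ furnished by the hypotheses of the definition. Concretely, write $S = 1$ as the finite conjunction of equations $t_j(z_1,\dots,z_k) = s_j(z_1,\dots,z_k)$, $j = 1,\dots,N$, where the $t_j, s_j$ are terms in the language of $\mathcal{A}$. By hypothesis (2) fix a finite system $\Sigma_X(\bar y;\bar u)$ over $\mathcal{M}$ with $\bar y \in X \iff \exists \bar u\,\Sigma_X(\bar y;\bar u)$; by hypothesis (4), for each function symbol $f$ of arity $r$ of $\mathcal{A}$ (constants being the case $r = 0$) fix a finite system $\Sigma_f(\bar y^{(1)},\dots,\bar y^{(r)},\bar y^{(0)};\bar v)$ over $\mathcal{M}$ e-defining the preimage under $\phi$ of the graph of $f$; and by hypothesis (3) fix a finite system $\Sigma_{=}(\bar p,\bar q;\bar w)$ e-defining the preimage of equality.

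Next I would build $S'$ by structural induction on terms. To each term $t$ in the variables $z_1,\dots,z_k$ I attach a block $\bar Y_t = (Y_{t,1},\dots,Y_{t,n})$ of $\mathcal{M}$-variables and a finite system $\mathrm{Def}_t$ over $\mathcal{M}$, as follows. If $t = z_i$ I reuse a fixed block $\bar Y_i$ (the same for every occurrence of $z_i$) and let $\mathrm{Def}_t$ be a copy of $\Sigma_X(\bar Y_i;\,\cdot\,)$. If $t = f(t_1,\dots,t_r)$ I take the blocks $\bar Y_{t_1},\dots,\bar Y_{t_r}$ and systems $\mathrm{Def}_{t_1},\dots,\mathrm{Def}_{t_r}$ recursively, introduce a new block $\bar Y_t$, and set $\mathrm{Def}_t$ equal to the union of the $\mathrm{Def}_{t_\ell}$ together with fresh copies of $\Sigma_X(\bar Y_t;\,\cdot\,)$ and of $\Sigma_f(\bar Y_{t_1},\dots,\bar Y_{t_r},\bar Y_t;\,\cdot\,)$. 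Finally $S' = 1$ is the union, over $j = 1,\dots,N$, of $\mathrm{Def}_{t_j}$, $\mathrm{Def}_{s_j}$ and a fresh copy of $\Sigma_{=}(\bar Y_{t_j},\bar Y_{s_j};\,\cdot\,)$, where throughout the auxiliary variables of distinct copies are kept disjoint. All of this is manifestly effective, which gives the procedure.

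For correctness, suppose $\bar a = (a_1,\dots,a_k)$ solves $S = 1$ in $\mathcal{A}$. By surjectivity of $\phi$ (hypothesis (1)) choose $\bar Y_i \in \phi^{-1}(a_i)$; give each intermediate block $\bar Y_t$ the value of a chosen preimage of $t(\bar a)$, and use the defining property of $\Sigma_X$, $\Sigma_f$, $\Sigma_{=}$ to supply witnesses for all auxiliary variables — the result is a solution of $S' = 1$ lying over $\bar a$. Conversely, any solution of $S' = 1$ in $\mathcal{M}$ assigns to each $\bar Y_i$ a tuple of $X$, hence a well-defined element $a_i := \phi(\bar Y_i) \in \mathcal{A}$; an induction on term structure using the $\Sigma_f$-parts shows that $\phi(\bar Y_t)$ equals the value of $t$ at $(a_1,\dots,a_k)$, and then the $\Sigma_{=}$-parts force $t_j(\bar a) = s_j(\bar a)$ for all $j$, so $\bar a$ solves $S = 1$. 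This is exactly the claimed correspondence between solutions of $S' = 1$ and their images under $\phi$.

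The argument has no conceptual obstacle; the only thing demanding care is the bookkeeping in the structural induction — consistently reusing the block $\bar Y_i$ for every occurrence of $z_i$, keeping the auxiliary variables of the different copies of $\Sigma_X$, $\Sigma_f$, $\Sigma_{=}$ disjoint, and, because $\phi$ need not be injective, transporting equality in $\mathcal{A}$ through $\Sigma_{=}$ rather than through literal equality of blocks in $\mathcal{M}$. Once this is set up correctly the verification is routine.
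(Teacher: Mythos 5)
The paper states this lemma without proof, treating it as a standard fact about e-interpretations, so there is no paper argument to compare against; your proposal supplies the expected argument. Your construction — pulling $S=1$ back term-by-term, replacing each $\mathcal{A}$-variable by a block of $n$ variables over $\mathcal{M}$ constrained by $\Sigma_X$, each function symbol by a fresh copy of the Diophantine definition of its graph, and equality by $\Sigma_{=}$, with disjoint auxiliary witnesses throughout — is precisely the standard pullback proof and is correct and complete, including the two points that actually need care (reusing one block per source variable, and routing equality through $\Sigma_{=}$ rather than literal block equality since $\phi$ need not be injective).
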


\begin{lemma}\label{le:reductions}
 Let $\A$ and $\M$ be algebraic structures such that $\A \to_e \M$. Then the following holds:
\begin{itemize}
\item [1)] if the Diophantine problem  in $\mathcal{A}$ is undecidable then it is undecidable in $\mathcal{M}$.
\item [2)] if the Diophantine problem  in $\mathcal{M}$ is decidable then it is decidable in $\mathcal{A}$.
\end{itemize}
\end{lemma}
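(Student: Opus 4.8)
The plan is to derive both statements directly from Lemma \ref{le:reduction}. Assume $\A \to_e \M$, so by definition there is a Diophantine interpretation $\phi : X \subseteq \M^n \to \A$, and Lemma \ref{le:reduction} supplies an effective procedure that converts any finite system of equations $S = 1$ over $\A$ into a finite system $S' = 1$ over $\M$ with the property that $\bar a$ solves $S = 1$ in $\A$ iff $\phi^{-1}(\bar a)$ solves $S' = 1$ in $\M$. In particular, $S = 1$ has a solution in $\A$ if and only if $S' = 1$ has a solution in $\M$: the forward direction is immediate from the stated equivalence, and for the converse, since $\phi$ is onto, any solution of $S' = 1$ lying in $X$ projects under $\phi$ to a solution of $S = 1$, while the construction in Lemma \ref{le:reduction} (which builds in the defining equations of $X$) guarantees that every solution of $S' = 1$ does lie over $X$.

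For part 1), suppose for contradiction that the Diophantine problem in $\M$ is decidable. Given an arbitrary finite system $S = 1$ over $\A$, apply the effective procedure of Lemma \ref{le:reduction} to obtain $S' = 1$ over $\M$, then run the decision algorithm for $\M$ on $S' = 1$. By the equisatisfiability established above, the answer tells us whether $S = 1$ has a solution in $\A$. This is a decision algorithm for the Diophantine problem in $\A$, contradicting undecidability; hence the Diophantine problem in $\M$ is undecidable. Part 2) is the contrapositive of part 1): if the Diophantine problem in $\M$ is decidable, then the same composed algorithm (reduction followed by the $\M$-decision procedure) decides the Diophantine problem in $\A$, so it is decidable in $\A$.

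Neither step presents a genuine obstacle — all the real work has been front-loaded into Lemma \ref{le:reduction}. The only point requiring a little care is the equisatisfiability claim, specifically checking that the system $S' = 1$ produced by the reduction is unsatisfiable in $\M$ precisely when $S = 1$ is unsatisfiable in $\A$; this hinges on the reduction incorporating the Diophantine definition of the domain $X$ together with the Diophantine definitions of equality and of the graphs of the operations and predicates of $\A$, so that solutions of $S' = 1$ are forced to encode genuine tuples of elements of $\A$ via $\phi$. Once that is in hand, both implications are a routine composition of algorithms. $\qed$
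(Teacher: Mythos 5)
Your proof is correct and follows essentially the same route as the paper, namely reducing everything to Lemma \ref{le:reduction} (the paper in fact leaves the derivation of Lemma \ref{le:reductions} implicit, and you have just spelled out the intended argument). Your handling of the one subtle point — that solutions of $S'=1$ are forced to lie over $X$, so satisfiability transfers in both directions — is the right observation, and noting that 2) is the contrapositive of 1) is accurate.
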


\section{Maximal rings of scalars and algebras}

Let  $A = \{a_1, \ldots,a_n\}$, $n \geq 2$,  be a finite set  and $K$ a field. $L$ be a free  Lie $K$-algebra with basis $A$. It is known that the maximal ring of scalars is the field $K$  and the field $K$ is 0-interpretable in $L$ \cite{Lie}. In this section we show that the field $K$ is interpretable in $L$ by finite systems of equations with coefficients in $A$.

 We denote a tuple $(y_1, \ldots,y_n)$ by $\bar y$, and the same for $\bar x, \bar z, \bar \alpha$, etc.
By $[x,y]$  we denote  the multiplication in $L$.

1). Consider a set 
$$
A = \{(y_1, \ldots,y_n) \in L^n \mid \bigwedge_{i = 1}^n [y_i,a_i] = 0\}.
$$ 
Since for any $y \in L$, $(y,a_i) = 0$ if and only if $y = \alpha a_i$ for some $\alpha \in K$, it follows that 
$$
A = \{(\alpha_1 a_1, \ldots, \alpha_n a_n) \mid \alpha_i \in K\}.
$$
Notice that $A$ is defined by a finite system of equations in $L$.

2). Put 
$$
A_0 = \{ \bar y \in A \mid \bigwedge_{i,j=1}^n [y_i,a_j] = [a_i,y_j]\}.
$$
The set $A_0$ is defined by a finite system of equations in $L$. Notice that 
$$
A_0 = \{(\alpha a_1, \ldots, \alpha a_n) \mid \alpha \in K\}.
$$
We denote the tuple $(\alpha a_1, \ldots, \alpha a_n)$ by $\alpha \bar a$. From now on we    interpret an element $\alpha \in K$ by the  tuple $\alpha \bar a$, so $K$ is interpreted as the set $A_0$.

 Now we interpret operations of addition $+$ and multiplication $\cdot$ from $K$ on the set $A_0$. 
 
 3) Let $\bar x, \bar y, \bar z \in A_0$. Then $\bar x = \alpha \bar a, \bar y = \beta \bar a, \bar z  = \gamma \bar a$ for some $\alpha, \beta, \gamma \in K$. Notice that $\alpha +\beta  = \gamma$ in $K$ if and only if $\bar x, \bar y, \bar z$ satisfy in $L$ the following system of equations:
 $$
 \bigwedge_{i = 1}^n (x_i +y_i = z_i).
 $$
 This interprets addition $+$ on $A_0$.  Note that it suffices to write only one equation $x_1 +y_1 = z_1$.
 
 4) Observe now that in the notation above $\alpha \cdot \beta  = \gamma$ if and only if $\bar x, \bar y, \bar z$ satisfy in $L$ the following system of equations:
 
 $$
 \bigwedge_{i,j = 1}^n [x_i,y_j]  = [z_i,a_j].
 $$
 In fact, it suffices to write only one equation $ [x_1,y_2]  = [z_1,a_2]$. This interprets multiplication $\cdot$ on $A_0$ by equations.
 
5)   Now we interpret the action of $K$, viewed as the set $A_0$,  on $L$ by a finite system of equations in $L$. 

Let $x,z \in L$ and $\bar y = \alpha \bar a \in A_0$. Then $z = \alpha x$ in $L$ if and only if $x,z$ and the components $y_1, \ldots, y_n$ of $\bar y$ satisfy the following system of equations:
$$
\bigwedge_{i = 1}^n [z,a_i] = [x,y_i].
$$
 Indeed,  $[z,a_i] = [x,y_i]$ implies that $[z,a_i] = [x,\alpha a_i]$, so $[z-\alpha x,a_i] = 0$ for $i = 1, \ldots, n$. Since $Ann(a_1, \ldots, a_n) = 0$  it follows that $z- \alpha x = 0$, so $z = \alpha x$ as required. 
  
  We summarize the argument above in the following theorem
  
  \begin{theorem} \label{th:2}
  Let $A = \{a_1, \ldots,a_n\}$, $n \geq 2$,  be a finite set,  $K$ a field, and  $L$ a free  Lie $K$-algebra with basis $A$. Then the field $K$ and its action on $L$ are defined by finite systems of equations in $L$.
  \end{theorem}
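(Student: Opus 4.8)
The plan is to follow the construction carried out in steps 1)--5) above and verify that it meets the four requirements of an e-interpretation from the definition. The only genuinely algebraic input needed is the structure of centralizers of basis elements in a free Lie algebra: for $y\in L$ and $a_i\in A$ one has $[y,a_i]=0$ if and only if $y=\alpha a_i$ for some $\alpha\in K$, and consequently the common annihilator $\mathrm{Ann}(a_1,\dots,a_n)=\{y\in L:[y,a_i]=0\ \text{for all}\ i\}$ is zero; I also use that $[a_i,a_j]\neq 0$ for $i\neq j$. These are standard facts about free Lie algebras (the centralizer of a nonzero element is a one-dimensional subspace spanned by its root, cf. \cite{Lie,RS}), and everything else is formal manipulation of equations; this is why I expect the centralizer facts to be the only non-routine ingredient.

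First I would fix the domain of the interpretation as $X=A_0\subseteq L^n$, cut out by the finite system $\bigwedge_{i=1}^n[y_i,a_i]=0$ together with $\bigwedge_{i,j=1}^n[y_i,a_j]=[a_i,y_j]$. By the centralizer fact the first block forces $\bar y=(\alpha_1a_1,\dots,\alpha_na_n)$ with $\alpha_i\in K$, and then $[y_i,a_j]=[a_i,y_j]$ becomes $(\alpha_i-\alpha_j)[a_i,a_j]=0$, hence $\alpha_i=\alpha_j$ for all $i,j$ since $[a_i,a_j]\neq 0$; thus $X=\{\alpha\bar a:\alpha\in K\}$, the set $X$ is e-definable, and $\phi:\alpha\bar a\mapsto\alpha$ is a bijection $X\to K$. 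This gives conditions 1) and 2) at once, and since $\phi$ is a bijection the preimage of $"="$ is the diagonal of $X\times X$, e-definable in $L$ by adjoining the single equation $x_1=y_1$ to the defining system of $X$ (for $\bar x,\bar y\in X$ the first coordinate already determines the scalar), which is condition 3).

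Next I would verify condition 4) for the two field operations and for the scalar action on $L$. For $\bar x=\alpha\bar a$, $\bar y=\beta\bar a$, $\bar z=\gamma\bar a$ in $X$, the equation $x_1+y_1=z_1$ in $L$ says $(\alpha+\beta)a_1=\gamma a_1$, i.e. $\alpha+\beta=\gamma$, so the preimage of the graph of $+$ is e-definable; likewise $[x_1,y_2]=[z_1,a_2]$ says $\alpha\beta[a_1,a_2]=\gamma[a_1,a_2]$, i.e. $\alpha\beta=\gamma$, handling multiplication. For the action, given $x,z\in L$ and $\bar y=\alpha\bar a\in X$, the system $\bigwedge_{i=1}^n[z,a_i]=[x,y_i]$ reads $[z-\alpha x,a_i]=0$ for every $i$, hence $z-\alpha x\in\mathrm{Ann}(a_1,\dots,a_n)=0$, so $z=\alpha x$; the converse is immediate. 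Hence the graph of the action map $K\times L\to L$ is e-definable in $L$. Note that all of these equations use only the bracket, addition, and constants from $A$, so the interpretation lives in the pure Lie-algebra language with coefficients in $A$. Collecting these four points shows $\phi$ is an e-interpretation of the field $K$ together with its action on $L$, which is exactly the assertion of the theorem, and since $n\geq 2$ the pair of distinct basis elements needed in the argument is always available.
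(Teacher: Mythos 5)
Your proposal is correct and follows essentially the same construction as the paper's own steps 1)--5) preceding the theorem: defining $A_0$ by the two blocks of bracket equations, then encoding addition, multiplication, and the scalar action by single bracket identities. The extra verifications you supply (that $[y_i,a_j]=[a_i,y_j]$ forces $\alpha_i=\alpha_j$, and the explicit check of the four conditions of an e-interpretation) are straightforward and consistent with what the paper leaves implicit.
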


Now Theorem \ref{th:2} and Lemma \ref{le:reductions} imply the following result.

\begin{theorem}
Let $A = \{a_1, \ldots,a_n\}$, $n \geq 2$,  be a finite set,  $K$ a field, and  $L$ a free  Lie $K$-algebra with basis $A$.  If the Diophantine problem in the field $K$ is undecidable then it is undecidable in the algebra $L$.

\end{theorem}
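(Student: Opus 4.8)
\section*{Proof proposal}

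The statement follows directly by combining Theorem~\ref{th:2} with Lemma~\ref{le:reductions}, so the plan has two steps: first recast the constructions of this section as a genuine Diophantine interpretation $K \to_e L$ in the sense of the definition of an e-interpretation, and then invoke the reduction lemma.

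For the first step I would take the map $\phi\colon A_0 \to K$ sending $\alpha\bar a = (\alpha a_1,\dots,\alpha a_n)$ to $\alpha$ and verify the four clauses. Clause~(1): by Step~2, $\phi$ is a bijection of $A_0$ onto $K$, hence onto. Clause~(2): also by Step~2, $A_0$ is cut out by a finite system of equations in $L$, so it is e-definable. Clause~(3): the preimage under $\phi$ of the diagonal of $K$ is $\{(\bar x,\bar y)\in A_0\times A_0 \mid \bigwedge_{i=1}^n x_i = y_i\}$ (indeed the single equation $x_1 = y_1$ already suffices), which is e-definable. Clause~(4): for the ring signature $\{+,\cdot,0,1\}$ relevant to the Diophantine problem, the preimages of the graphs of $+$ and $\cdot$ are the finite systems exhibited in Steps~3 and~4, while the constants $0_K$ and $1_K$ correspond to the tuples $0\bar a = (0,\dots,0)$ and $1\bar a = (a_1,\dots,a_n)$, each defined by a finite system in $L$; if one also wants the scalar action of $K$ on $L$ recorded in the structure, Step~5 supplies it. Hence $K \to_e L$.

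For the second step I would apply Lemma~\ref{le:reductions}(1) with $\A = K$ and $\M = L$: since $K \to_e L$ and, by hypothesis, the Diophantine problem in $K$ is undecidable, it is undecidable in $L$. The only point needing (routine) attention is that the effective translation of Lemma~\ref{le:reduction} turns a finite system of polynomial equations over $K$ with coefficients in a computable subring $K_0$ into an equisatisfiable finite system over $L$: each coefficient $\alpha\in K_0$ is replaced by the tuple $\alpha\bar a$ and every occurrence of $+$, $\cdot$, $0$, $1$, $=$ by the corresponding defining system above. There is no genuine obstacle here; all the mathematical content sits in Theorem~\ref{th:2}, and the present theorem is bookkeeping on top of it together with Lemma~\ref{le:reductions}.
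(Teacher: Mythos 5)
Your proposal is correct and follows exactly the same route as the paper: the paper simply notes that the theorem is an immediate consequence of Theorem~\ref{th:2} together with Lemma~\ref{le:reductions}. You have merely unpacked what the paper leaves implicit, namely that Steps~1--5 of Section~3 furnish the e-interpretation $K \to_e L$ required to invoke Lemma~\ref{le:reductions}(1).
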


\section{E-interpretability of the arithmetic in a  free Lie algebra of rank $> 2$}

Let $A=\{a,b,c,  a_1,\ldots, a_n \}$ be a finite set with $|A| \geq 3$, $K$ an integral domain, and $L$ a free Lie algebra with  basis  $A$ and  coefficients in  $K$.

For a subset $ S \subseteq L$ by $\langle S\rangle$ we denote the $K$-subalgebra of $L$ generated by $S$. By  $[z_1,z_2,\ldots ,z_n]$  we denote the left-normed product of elements $z_1,z_2,\ldots ,z_n$ in $L$. 
For $u,v \in L$ and $\alpha \in R$ by $[u,(v+\alpha)]$ we denote the element $[u,v] +\alpha u \in L$ and refer to it as a "product" of $u$ and $v+\alpha$. We denote the product $[u,v,\ldots ,v]$, where $v$ occurs $n$ times, by $[u,v^{(n)}]$, in particular, $[u,v^{(0)}] = u$. Now if $f(t)=\alpha _nt^{n}+\ldots +\alpha _0$ is a polynomial in one variable $t$ and coefficients in $K$, then by $[u,f(a)]$ we denote the element $\alpha _n[u,a^{(n)}]+\ldots +\alpha _0u$ from $L$. The endomorphism $ad(a): x \to [x,a]$ of the $K$-vector space $L^+$ generates in $End(L^+)$ a $K$-subalgebra $\langle ad(a)\rangle_K$ which isomorphic to the ring of polynomials $K[a]$, so   $K[a]$  acts on $L$ as a subalgebra of $End(L^+)$. Now, the element $[u,f(a)]$ is precisely the result of the action of the polynomial $f(a) \in K[a]$ on the element $u$.

Recall that 
a Hall basis $\mathcal H$ of $L$ with respect to the set of generators  $A$  is a  linearly ordered  set of homogeneous elements of $L$ such that $\mathcal H = \cup_{n=1}^\infty H_n$, where $H_n$ are   sets of all elements in $\mathcal H$ of degree $n$ which  are  defined by induction as follows.

I) The set $H_1$ of all elements
of degree 1 in $\mathcal H$ is  precisely the set of free generators from $A$  with some a priori given order. 

II) Suppose the set $H_{n-1}$ of all  elements of degree $n-1$ in  $\mathcal H$ is defined and the set $\cup_{i = 1}^{n-1}H_i$ is linearly ordered. Then the set $H_n$ of all elements of degree $n>1$  in $\mathcal H$ consists  precisely of the elements
$[e, f]$,  where $e, f \in \cup_{i = 1}^{n-1}H_i$  such that 

(i) $deg\  e + deg\ f = n$, 

(ii)
$e > f$, 

(iii) $deg\ e = 1$ or, if $e = [e_1, e_2]$ with $e_1, e_2 \in \cup_{i = 1}^{n-1}H_i$, then $e_2\leq f$. 

The order on $\cup_{i = 1}^{n}H_i$ extends the order on $\cup_{i = 1}^{n-1}H_i$ in such a way that the order on $H_n$ is an arbitrary linear order and for all $e, f \in \cup_{i = 1}^{n}H_i$, $deg\ e > deg\  f$ implies $e > f$.   By induction all the sets $H_n$ are defined and we put $\mathcal H = \cup_{n=1}^\infty H_n$.

 We order generators from $A$ in such a way that $a < b < c$. Then elements $[b,a^{(k)}]$  belong to the Hall basis $\mathcal H$  of $L$ with respect to $A$. If $k>p$ then the elements  $[[b,a^{(k)}],[b,a^{(p)}]]$ also belong to $\mathcal H$.  From now on we fix a particular Hall basis $\mathcal H = {\mathcal H}_A$ of $L$ with respect to the basis $A$. It is known (see for example, \cite{Bahturin,Magnus}) that $\mathcal H$ is a $K$-linear basis of $L$.

 \begin{lemma} \label{le:faithful} Let $f \in K[t]$. Then  $[b,f(a)]=0$ implies that $f = 0$ in $K[t]$.   
 \end{lemma}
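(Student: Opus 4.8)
The plan is to expand $[b,f(a)]$ in the fixed Hall basis $\mathcal{H} = \mathcal{H}_A$ and invoke linear independence. Writing $f(t) = \alpha_n t^n + \cdots + \alpha_1 t + \alpha_0$ with $\alpha_i \in K$, the definition of the action of $K[a]$ on $L$ gives
$$
[b,f(a)] = \alpha_n [b,a^{(n)}] + \cdots + \alpha_1 [b,a] + \alpha_0 b,
$$
where we use the convention $[b,a^{(0)}] = b$. So it suffices to show that the elements $b, [b,a], [b,a^{(2)}], \ldots$ belong to a $K$-linear basis of $L$; then $[b,f(a)] = 0$ forces every $\alpha_i = 0$, i.e. $f = 0$ in $K[t]$.

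First I would confirm, directly from the inductive clauses (I)--(III) defining $\mathcal{H}$ (with the chosen order $a < b < c$), that each $[b,a^{(k)}]$ lies in $\mathcal{H}$: indeed $b \in H_1$; $[b,a] \in H_2$ since $b > a$ and $\deg b = 1$; and inductively $[b,a^{(k)}] = [[b,a^{(k-1)}],a]$ lies in $H_{k+1}$, because $e := [b,a^{(k-1)}]$ has degree $k > 1 = \deg a$, so $e > a$, and writing $e = [e_1,e_2]$ with $e_2 = a$ we have $e_2 = a \le a$, which is exactly clause (III). This observation was in fact already recorded in the text preceding the lemma.

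Next I would note that these elements are pairwise distinct: $[b,a^{(k)}]$ is homogeneous of degree $k+1$, so distinct values of $k$ yield elements of distinct degrees, hence distinct elements of $\mathcal{H}$. Since $\mathcal{H}$ is a $K$-linear basis of $L$ (the classical Hall basis theorem cited above), the finite subset $\{b, [b,a], \ldots, [b,a^{(n)}]\} \subseteq \mathcal{H}$ is $K$-linearly independent. Therefore $\sum_{i=0}^{n} \alpha_i [b,a^{(i)}] = 0$ implies $\alpha_0 = \cdots = \alpha_n = 0$, i.e. $f = 0$ in $K[t]$, as claimed.

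There is essentially no obstacle here. The only point needing any care is the verification that each $[b,a^{(k)}]$ is a genuine Hall basis element, and that is an immediate check against the Hall rules; everything else is just linear independence of a subset of a basis. (Note that integrality of $K$ is not used in this particular lemma — it only matters that the Hall basis spans $L$ freely — although it will be needed later.)
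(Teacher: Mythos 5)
Your proof is correct and follows the same approach as the paper's: expand $[b,f(a)]$ in the Hall basis and invoke linear independence of the elements $b, [b,a], [b,a^{(2)}], \ldots$. You simply supply more detail than the paper's one-line argument, explicitly checking the Hall conditions for each $[b,a^{(k)}]$ and the convention $[b,a^{(0)}]=b$ (which the paper's display elides by writing the constant term as $\alpha_0$ rather than $\alpha_0 b$).
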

 \begin{proof}  Let $f =\alpha _nt^{n}+\ldots +\alpha _0 \in K[t]$. Then by definition $[b,f(a)]  = \alpha_n[b,a^{(n)}] + \ldots +\alpha_0$ is a linear combination of elements $[b,a^{(k)}]$  that belong to the Hall basis of $L$. Hence if $[b,f(a)] = 0$ then all the coefficients in the linear combination are equal to zero, so $f = 0$, as claimed.
 \end{proof}

\begin{lemma}\label{le:ua} Let  $u$ be an element of $L$ such that $u \in \langle a,b\rangle$, $u$ is homogeneous in $b$ with $deg_b u = 2$, and $u$ is homogeneous in $a$ with $deg_a u = 2n$. Then  $[u,a]\neq [b,a^{(2n+1)},b]. $
\end{lemma}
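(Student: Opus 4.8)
The plan is to pass to the two-generated free Lie subalgebra $\langle a,b\rangle$, which contains all three elements $u$, $[u,a]$ and $[b,a^{(2n+1)},b]$, and to compute in the basis coming from Lazard elimination. Set $b_k:=[b,a^{(k)}]$ for $k\ge 0$, so $b_0=b$. By Lazard elimination $\langle a,b\rangle = Ka\oplus J$, where the ideal $J$ generated by $b$ is the \emph{free} Lie algebra on the set $\{b_k:k\ge 0\}$. Hence the homogeneous component of $\langle a,b\rangle$ of $b$-degree $2$ is spanned by the brackets $[b_i,b_j]$ with $i>j\ge 0$, and these are precisely the Hall basis elements of $\mathcal H$ of $b$-degree $2$ (for $i>j$ this was already recorded in the text), so in particular they are linearly independent. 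Since $\deg_a[b_i,b_j]=i+j$, the hypotheses on $u$ force
\[
u=\sum_{j=0}^{n-1}c_j\,[b_{2n-j},b_j],\qquad c_j\in K .
\]
If $n=0$ this reads $u=0$, and the claim is immediate since $[b,a,b]=[b_1,b_0]\neq 0$; so from now on assume $n\ge 1$.

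Next I would expand $[u,a]$ in the same basis. Only the Jacobi identity is needed: $[[b_i,b_j],a]=[[b_i,a],b_j]+[b_i,[b_j,a]]=[b_{i+1},b_j]+[b_i,b_{j+1}]$. Writing $e_j:=[b_{2n+1-j},b_j]$ for $0\le j\le n$ (these are $n+1$ pairwise distinct Hall basis elements, hence linearly independent), this gives
\[
[u,a]=\sum_{j=0}^{n-1}c_j\,(e_j+e_{j+1})=c_0e_0+\sum_{k=1}^{n-1}(c_{k-1}+c_k)\,e_k+c_{n-1}e_n .
\]
On the other hand $[b,a^{(2n+1)},b]=[b_{2n+1},b_0]=e_0$. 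If the two coincided, comparing coefficients in the linearly independent family $e_0,\dots,e_n$ would give $c_0=1$, $c_{k-1}+c_k=0$ for $1\le k\le n-1$, and $c_{n-1}=0$; the first two relations yield $c_k=(-1)^k$, so $c_{n-1}=(-1)^{n-1}$, contradicting $c_{n-1}=0$ because $(-1)^{n-1}\neq 0$ in the integral domain $K$. Hence $[u,a]\neq[b,a^{(2n+1)},b]$.

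The point that needs care is uniformity in $\mathrm{char}\,K$. A coarser argument — comparing, say, the coefficient of the associative monomial $b\,a^{2n+1}b$ in the universal enveloping algebra $K\langle a,b\rangle$ — only produces the relation $2\neq 0$ and therefore breaks in characteristic $2$. Working in the Lazard basis $\{[b_i,b_j]\}$ repackages the obstruction as the relation $(-1)^{n-1}=0$, which is false in \emph{every} integral domain, regardless of characteristic. The one auxiliary fact to justify is that the $b$-degree-$2$ part of $\langle a,b\rangle$ is spanned by the $[b_i,b_j]$ with $i>j\ge 0$ and has no further Hall basis elements; this is immediate from Lazard elimination, and can also be checked by hand from the defining conditions of the Hall basis $\mathcal H$ using the chosen order $a<b$.
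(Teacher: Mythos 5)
Your proof is correct and is essentially the same as the paper's: both expand $u$ in the Hall basis of $b$-degree $2$ as a combination of commutators $[[b,a^{(i)}],[b,a^{(j)}]]$, apply the Jacobi identity to write $[u,a]$ in that same basis, and derive a two-term linear recurrence on the coefficients whose boundary conditions are inconsistent. The only differences are cosmetic — you index from the opposite end of the recurrence and make the Lazard-elimination and characteristic-independence remarks explicit, whereas the paper starts from the vanishing of the coefficient of $[[b,a^{(n+1)}],[b,a^{(n)}]]$ and concludes $u=0$ — but the underlying computation and conclusion coincide.
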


\begin{proof}  Let $u$ be as above. Then $u \in \langle a,b\rangle$ and $\langle a,b\rangle$ is a free Lie $K$-algebra with basis $a,b$, so $u = \sum_i \alpha_i u_i$ where $\alpha_i \in K$ and $u_i \in {\mathcal H}_A$. Since $u$ is of degree 2 in $b$ it follows that each $u_i$ is a product of two elements of the type $[b,a^{(k)}]$. Furthermore, since $u$ is of degree $2n$ in $a$ it follows that  $u_i$ are elements of the type $[[b,a^{k}],[b,a^{(2n-k)}]]$, where $n+1\leq k\leq 2n$. Hence 
$$u=\sum _{k=n+1}^{2n} \alpha _k[[b,a^{k}],[b,a^{(2n-k)}]].$$
   Notice that by Jacoby identity
$$
[[b,a^{(k)}],[b,a^{(2n-k)}],a] = [[b,a^{(k+1)}],[b,a^{(2n-k)}]] + [[b,a^{(k)}],[b,a^{(2n-k+1)}]].
$$
Then 
$$
[u,a]=\alpha _{n+1}[[b,a^{(n+1)}],[b,a^{(n)}]]+\sum _{k=n+1}^{2n-1} (\alpha _k+\alpha _{k+1})[[b,a^{k+1}],[b,a^{(2n-k)}]]+\alpha _{2n}[b,a^{(2n+1)},b]
$$ 
and this is a linear combination of basis elements from ${\mathcal H}_A$. Suppose now, that $[u,a]=[[b,a^{(2n+1)}],b]$.
It follows that the linear combination above  equals to the basis element $[[b,a^{(2n+1)}],b]$, which implies that  $\alpha _{n+1}=0$ and, therefore, $\alpha _k=0$ for $n+1\leq k\leq 2n$, i.e., $u=0$, contradicting the assumption that $u$ is of degree 2 in $b$. This proves the lemma.

\end{proof}

\begin{lemma} \label{le:principal}
For any $r\in L$ and any $m,n \in \mathbb{N}$  there exists $s \in L$ such that 
\begin{equation}\label{jacoby} 
[[r,a^{(m)}],[b,a^{(2n)}]]= [[r,a^{(m+2n)}],b]+ [s,a].
\end{equation}  
and there exists $t \in L$ such that 
\begin{equation}\label{jacoby-} 
[[r,a^{(m)}],[b,a^{(2n+1)}]]= -[[r,a^{(m+2n+1)}],b]+ [t,a].
\end{equation}  

\end{lemma}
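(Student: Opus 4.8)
The plan is to prove \eqref{jacoby} and \eqref{jacoby-} simultaneously by induction on $n$, the only tool being the Jacobi identity written as the statement that $ad(a)\colon x\mapsto[x,a]$ is a derivation of $L$: for all $p,q\in L$,
\[
[[p,q],a]=[[p,a],q]+[p,[q,a]],\qquad\text{equivalently}\qquad [p,[q,a]]=[[p,q],a]-[[p,a],q].
\]
Throughout I take $p=[r,a^{(m)}]$, so that $[p,a]=[r,a^{(m+1)}]$, and $q$ equal to $[b,a^{(2n)}]$ or to $[b,a^{(2n+1)}]$, so that $[q,a]$ becomes $[b,a^{(2n+1)}]$ or $[b,a^{(2n+2)}]$ respectively. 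The key observation is that the term $[[p,q],a]$ on the right is automatically of the form $[\,\cdot\,,a]$ and so can simply be folded into the remainder term "$[s,a]$" or "$[t,a]$", which carries no structural requirement.

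I then establish the two implications that drive the induction. (a) Assuming \eqref{jacoby} holds for a given $n$ and all $r,m$, I deduce \eqref{jacoby-} for the same $n$: writing $[b,a^{(2n+1)}]=[[b,a^{(2n)}],a]$ and applying the derivation identity with $q=[b,a^{(2n)}]$ gives
\[
[[r,a^{(m)}],[b,a^{(2n+1)}]]=\bigl[[[r,a^{(m)}],[b,a^{(2n)}]],\,a\bigr]-[[r,a^{(m+1)}],[b,a^{(2n)}]];
\]
now rewrite the last summand by \eqref{jacoby} with $m$ replaced by $m+1$, obtaining $[[r,a^{(m+2n+1)}],b]+[s_1,a]$ for a suitable $s_1$, and collect terms to get \eqref{jacoby-} with $t=[[r,a^{(m)}],[b,a^{(2n)}]]-s_1$. (b) Assuming \eqref{jacoby-} holds for a given $n$, I deduce \eqref{jacoby} for $n+1$: writing $[b,a^{(2n+2)}]=[[b,a^{(2n+1)}],a]$ and applying the derivation identity with $q=[b,a^{(2n+1)}]$ gives
\[
[[r,a^{(m)}],[b,a^{(2n+2)}]]=\bigl[[[r,a^{(m)}],[b,a^{(2n+1)}]],\,a\bigr]-[[r,a^{(m+1)}],[b,a^{(2n+1)}]];
\]
rewrite the last summand by \eqref{jacoby-} with $m$ replaced by $m+1$, obtaining $-[[r,a^{(m+2n+2)}],b]+[t_1,a]$, and collect terms to get \eqref{jacoby} for $n+1$ (note $2(n+1)=2n+2$) with $s=[[r,a^{(m)}],[b,a^{(2n+1)}]]-t_1$. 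The base case is $n=0$, where \eqref{jacoby} reads $[[r,a^{(m)}],b]=[[r,a^{(m)}],b]+[s,a]$ and holds with $s=0$, since $[b,a^{(0)}]=b$. Starting from this and alternately applying (a) and (b) — which successively give \eqref{jacoby-} at level $0$, then \eqref{jacoby} at level $1$, then \eqref{jacoby-} at level $1$, and so on — establishes both identities for all $n\in\mathbb{N}$.

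I do not expect any genuine obstacle: everything is a formal manipulation valid in an arbitrary Lie algebra over $K$, and no property of $K$ is used. The only point requiring care is the bookkeeping — each induction hypothesis must be invoked at the shifted degree $m+1$ (with $r$ kept arbitrary, so that it legitimately applies to $[r,a^{(m+1)}]$), and one must consistently absorb every term of the form $[\,\cdot\,,a]$ into the new error element $s$ or $t$. Since $s$ and $t$ are unconstrained, this absorption is automatic and the argument goes through cleanly.
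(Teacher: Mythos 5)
Your proof is correct and uses the same core tool as the paper — the Jacobi identity in the form $[p,[q,a]]=[[p,q],a]-[[p,a],q]$ together with induction on $n$, starting from the trivial base $n=0$. The only difference is organizational: the paper proves \eqref{jacoby} on its own by applying the identity twice per step (dropping $2n\to 2n-2$) and then derives \eqref{jacoby-} from it in one extra application, whereas you run a single alternating induction that bootstraps \eqref{jacoby} and \eqref{jacoby-} off each other one degree at a time — a slightly cleaner packaging of the same argument.
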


\begin{proof}
Note that the Jacoby identity implies the  identity of the form 
\begin{equation} \label{eq:almostJacoby}
[u,[v,w]]=[[u,v],w]-[[u,w],v].
\end{equation}

Using the identity (\ref{eq:almostJacoby}) we will prove the equality  (\ref{jacoby})  by induction on $n$, i.e., we show  that for any $r\in L$ and any $n,m \in \mathbb{N}$ 
$$
[[r,a^{(m)}],[b,a^{(2n)}]]= [[r,a^{(m+2n)}],b]+ [s,a],
$$ 
for some $s\in L.$ 

It is obvious for $n=0$ and arbitrary $m\geq 0$, since in this case the equality (\ref{jacoby}) holds for $ s= 0$. 

Assume now that   (\ref{jacoby}) holds  for  any number $\ell$ less than $n$ and any $m$.

We put $u=[r,a^{(m)}], v=[b,a^{2n-1}], w=a$ and use the Jacobi identity in the  form (\ref{eq:almostJacoby}). Then
$$
[[r,a^{(m)}],[b,a^{(2n)}]]= [[r,a^{(m)}],[b,a^{(2n-1)}],a]-[[r,a^{(m+1)}],[b,a^{(2n-1)}]].
$$ 

Applying the identity (\ref{eq:almostJacoby}) again to the last term above 
we obtain 
$$
[[r,a^{(m)}],[b,a^{(2n)}]]= [[r,a^{(m)}],[b,a^{(2n-1)}],a]-[[r,a^{(m+1)}],[b,a^{(2n-2)}],a]+[[r,a^{m+2}],[b,a^{(2n-2)}]].
$$
By induction 
$$
[[r,a^{(m+2)}],[ba^{(2n-2)}]]=[[r,a^{(m+2n)}],b]+[s_1,a]
$$ 
hence 
$$
[[r,a^{(m)}],[ba^{(2n)}]] = [[r,a^{(m+2n)}],b] + [s,a],
$$
where 
$$
s=[[r,a^{(m)}],[b,a^{(2n-1)}]]-[[r,a^{(m+1)}],[ba^{(2n-2)}]]+s_1.
$$ 
This proves  (\ref{jacoby}). 

To show  (\ref{jacoby-}) rewrite the element $[[r,a^{(m)}],[b,a^{(2n+1)}]]$  using (\ref{eq:almostJacoby}) with $u=[r,a^{(m)}], v=[b,a^{2n}], w=a$. This results modulo $[L,a]$ in the equality
$$
[[r,a^{(m)}],[b,a^{(2n+1)}]]= -[[r,a^{(m+1)}],[b,a^{(2n)}]] 
$$
Rewriting the right-hand side using (\ref{jacoby}) one gets (\ref{jacoby-}) modulo $[L,a]$, as required. 
\end{proof}

\begin{lemma} \label{l1} Let $L$ be a free Lie algebra with basis $A = \{a,b,c, \ldots \}$ and coefficients in a field $K$.  Denote by $\phi(x,y,z,z_1,z_2)$ the following system of equations in variables $x,y,z,z_1,z_2$ and constants $a,b,c$ (which can be viewed as a conjunction of equations):
 $$
 [x,c]+[y,b]=[z,a] \wedge [x,b]=[z_1,a] \wedge [y,c]=[z_2,a].
 $$
  The positive existential formula
\begin{equation} \label{eq:1} 
\Psi (x,y)= \exists  z, z_1z_2 \phi(x,y,z,z_1,z_2)
\end{equation} 
defines in $L$ a  set 
$$
S =  \{ ([b,f(a^2)]+\alpha a, [c,f(a^2)]+\beta a) \mid f \in K[t], \alpha, \beta \in K\},
$$   
 here $f(a^2)$ is a polynomial in $K[a]$ all of whose odd powers have zero coefficients.
 
  \end{lemma}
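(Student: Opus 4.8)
The plan is to prove the asserted equality by establishing the two inclusions separately: first $S\subseteq\{(x,y):\Psi(x,y)\}$, which is a direct construction, and then the reverse inclusion, which carries the content.

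\textbf{The inclusion $S\subseteq\{(x,y):\Psi(x,y)\}$.} Given $x_0=[b,f(a^2)]+\alpha a$ and $y_0=[c,f(a^2)]+\beta a$ I would exhibit $z,z_1,z_2$ satisfying $\phi$. Since the three right-hand sides of $\phi$ all have the form $[\,\cdot\,,a]$, it is enough to check that $[x_0,b]$, $[y_0,c]$ and $[x_0,c]+[y_0,b]$ lie in $[L,a]=\operatorname{im}(ad(a))$ and then take preimages. Expanding and discarding the summands $\alpha[a,b],\ \alpha[a,c],\ \beta[a,b]$ (each already of the form $[\,\cdot\,,a]$, e.g.\ $[a,b]=[-b,a]$), this comes down to showing that $[[b,a^{(2k)}],b]$, $[[c,a^{(2k)}],c]$ and $[[b,a^{(2k)}],c]+[[c,a^{(2k)}],b]$ all lie in $[L,a]$. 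Each is obtained by telescoping the Jacobi identity $[[[p,a^{(i)}],[q,a^{(j)}]],a]=[[p,a^{(i+1)}],[q,a^{(j)}]]+[[p,a^{(i)}],[q,a^{(j+1)}]]$ (the move carried out in the proof of Lemma~\ref{le:ua}); for instance
\[
[[b,a^{(2k)}],b]=\Bigl[\ \sum_{\ell=0}^{k-1}(-1)^{\ell}\,[[b,a^{(2k-1-\ell)}],[b,a^{(\ell)}]]\ ,\ a\ \Bigr],
\]
the alternating sum closing up precisely because $2k$ is even, so the term $[[b,a^{(k)}],[b,a^{(k)}]]=0$ falls at the fold; the two other elements are treated the same way, with $c$ in the appropriate slot. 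This is the mechanism underlying Lemma~\ref{le:principal}.

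\textbf{The reverse inclusion.} Suppose $(x,y,z,z_1,z_2)$ solves $\phi$, so $[x,b],[y,c],[x,c]+[y,b]\in[L,a]$. The core step I would prove is the description of the one-sided solution sets,
\[
\{v\in L:[v,b]\in[L,a]\}=Ka\oplus\operatorname{span}_K\{[b,a^{(2k)}]:k\ge0\},
\]
together with its analogue with $b$ replaced by $c$. Granting this, $x=[b,f(a^2)]+\alpha a$ and $y=[c,h(a^2)]+\beta a$ for some $f,h\in K[t]$, $\alpha,\beta\in K$, and it remains only to use the third equation. Substituting and again discarding $\alpha[a,c]+\beta[a,b]$, it becomes $\sum_k f_k[[b,a^{(2k)}],c]+\sum_k h_k[[c,a^{(2k)}],b]\in[L,a]$. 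Grading by degree in $a$, the degree-$2k$ part sits in the span $T_{2k}$ of the Hall elements $[[b,a^{(i)}],[c,a^{(j)}]]$ with $i+j=2k$, on which $ad(a)$ acts by $[[b,a^{(i)}],[c,a^{(j)}]]\mapsto[[b,a^{(i+1)}],[c,a^{(j)}]]+[[b,a^{(i)}],[c,a^{(j+1)}]]$; hence $ad(a):T_{2k-1}\to T_{2k}$ is injective with one-dimensional cokernel, in which $[[b,a^{(2k)}],c]$ and $-[[c,a^{(2k)}],b]$ represent the same class — this is where evenness of $2k$ reappears. Consequently $f_k[[b,a^{(2k)}],c]+h_k[[c,a^{(2k)}],b]\equiv(f_k-h_k)[[b,a^{(2k)}],c]\pmod{[L,a]}$, which vanishes iff $f_k=h_k$; so $f=h$ and $(x,y)\in S$.

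\textbf{Proving the one-sided description — where the difficulty lies.} The inclusion $\supseteq$ is the computation above. For $\subseteq$, I would fix $v$ with $[v,b]\in[L,a]$ and split it into components multihomogeneous in the free generators other than $a$; since $[L,a]$, $ad(b)$ and the whole set-up respect this multigrading, each component again satisfies $[\,\cdot\,,b]\in[L,a]$. Reducing modulo the ideal $(a)$ generated by $a$, the image of $v$ centralizes $b$ in the free Lie algebra $L/(a)$, hence lies in $K\bar b$; so every component of $v$ whose multidegree in $A\setminus\{a\}$ is neither $0$ nor $e_b$ in fact lies in $(a)$. One then has to show that these components vanish and that the component of multidegree $e_b$, written $\sum_k\mu_k[b,a^{(k)}]$, contains no odd-index terms: grading $[v,b]=\sum_k\mu_k[[b,a^{(k)}],b]$ by degree in $a$, the even-index terms lie in $[L,a]$ by the telescope above, while $[[b,a^{(2n+1)}],b]\notin[L,a]$ is exactly the content of Lemma~\ref{le:ua} (with $2n+1=k$), forcing $\mu_k=0$ for odd $k$. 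The part I expect to be the main obstacle is disposing of the remaining multidegrees ($2e_b$, $e_b+e_c$, $e_c$, $e_{a_i}$, and so on): this requires Hall-basis computations showing that $ad(a)$ has no relevant cokernel on those multigraded components, and is presumably what the lemmas preceding the statement are arranged to provide.
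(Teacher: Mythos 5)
Your forward inclusion is fine and matches the paper's (the telescoping Jacobi computation is exactly Lemma~\ref{le:principal}; the identity you display checks out). The gap is entirely in the reverse direction, and you've put your finger on it yourself, but I want to be precise about why your route doesn't close.

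Your plan rests on the ``one-sided description''
\[
\{v\in L:[v,b]\in[L,a]\}=Ka\oplus\operatorname{span}_K\{[b,a^{(2k)}]:k\ge0\},
\]
which you do not prove, and your proposed mechanism for it does not work. Reducing modulo the \emph{ideal} $(a)$ only yields $v\equiv\mu b\pmod{(a)}$, which is far weaker than $v\in\langle a,b\rangle$ (the \emph{subalgebra}): for instance $[c,a]\in(a)$ but $[c,a]\notin\langle a,b\rangle$, and likewise $v$ could a priori contain monomials with arbitrary generators as long as $a$ appears once. Nothing in a mod-$(a)$ argument sees this. The paper gets $u\in\langle a,b\rangle$ from $[x,b]=[z_1,a]$ by citing \cite{RS}, Theorem~5.1, which is a substantial external input you never invoke; without it your multidegree bookkeeping cannot even start. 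Moreover, even granting $v\in\langle a,b\rangle$, the paper does \emph{not} establish your one-sided description: to exclude poly-homogeneous components of $b$-degree $\geq 2$ it uses the \emph{third} equation $[x,c]+[y,b]=[z,a]$ (splitting $w=t_1+t_2$ by the distinct multidegrees of $[u,c]$ and $[v,b]$, applying \cite{RS} again, and reaching a contradiction). In your plan the third equation is used only to match $f=h$ at the end, so the burden of killing $b$-degree $\geq 2$ falls entirely on the unproved one-sided claim. What you flag as ``presumably what the preceding lemmas are arranged to provide'' is in fact not provided by Lemmas~\ref{le:faithful}--\ref{le:principal}; the missing ingredient is \cite{RS} plus the interplay with the third equation.

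So: to repair the proof, replace the unproved one-sided description by (i) an appeal to \cite{RS}, Theorem~5.1 to place $u$ in $\langle a,b\rangle$ and $v$ in $\langle a,c\rangle$, (ii) Lemma~\ref{le:ua} to exclude odd $a$-degree when $\deg_b u=1$, and (iii) the third equation, split by poly-homogeneous components, to exclude $\deg_b u\geq 2$. Your cokernel computation for $ad(a)$ on the $(1,1,m)$ component and the resulting identification $f=h$ is correct and matches the paper's final step (Lemma~\ref{le:faithful}).
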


\begin{proof}  
Observe that the truth set $\phi(L)$ of the formula $\phi(x,y,z,z_1,z_2)$ in $L$ is a $K$-linear subspace of $L^5$. Similarly, the truth set $\Psi(L)$ of the formula $\Psi$, as a natural projection of $\phi(L)$, is  a $K$-linear subspace of $L^2$. 
Hence to show that  $S \subseteq \Phi(L)$ it suffices to prove   that the set $\Psi(L)$ contains all the pairs $([b,a^{(2n)}], [c,a^{(2n)}])$ and $(\alpha a, \beta a)$.   The latter is easy, since $(\alpha a, \beta a, -\alpha a -\beta a, -\alpha a, -\beta a)$ is a solution to the system $\phi$  for any $\alpha, \beta \in K$.  

Now we show that $([b,a^{(2n)}], [c,a^{(2n)}]) \in \Psi(L)$ for any $n \in \mathbb{N}$. 
To do this we use the identity (\ref{eq:almostJacoby}) from Lemma \ref{le:principal}, which tells one that   
$$
[[r,a^{(m)}],[b,a^{(2n)}]]= [[r,a^{(m+2n)}],b]+ [s,a]
$$  
 for some $s \in L$.

The identity $[u,u]=0$ implies $$0=[[b,a^{(n)}],[b,a^{(n)}]], \ n\in\mathbb N.$$

We can rewrite  the  term on the right using  equality (\ref{jacoby}) or (\ref{jacoby-})  where $m = n$  and obtain
$$
[[b,a^{(2n)}],b]+[u,a]=0
$$ 
for some $u\in L.$ Therefore elements $x=[b,a^{(2n)}]$ are solutions to the equation
$[x,b]=[z_1,a]$.  Similarly, the elements $y=[c,a^{(2n)}]$ are solutions to the equation
$[y,c]=[z_2,a].$  Now to prove  that a  pair $(x,y)$, where  $x=[b,a^{(2n)}]$, $y=[c,a^{(2n)}]$ satisfies  the formula $\Psi(x,y)$ it suffices to show  that this pair satisfies the equation $[x,c]+[y,b]=[z,a]$.  

Consider  the anti-commutativity identity  
$$0=[[b,a^{(2n)}],c]+[c,[b,a^{(2n)}]],$$
 where $k,n\in\mathbb N$. Using (\ref{jacoby}) with $m = 0$ one can rewrite the term on the right in the identity above in the form
$$[[b,a^{(2n)}],c]+[[c,a^{(2n)}],b]+[v,a]=0$$
for some $v \in L$.  Therefore any pair $(x,y)$ where $x=[b,a^{(2n)}]$, $y=[c,a^{(2n)}]$ satisfies the equation $[x,c]+[y,b]=[z,a]$, hence  the formula $\Psi(x,y)$.

We will now show that each homogeneous element in  $w \in L$ satisfying $\Phi (x)=\exists y\Psi(x,y)$  is equal either to  $ \alpha a$ or to $\alpha [b,a^{(2n)}]$ for some $\alpha \in K, n \in \mathbb{N}$.  Let $x = u, y = v$ be a solution to $\Psi(x,y)$ for some $u,v \in L$.
By \cite{RS}, Theorem 5.1, since $x = u$ satisfies the equation $[x,b]=[z_1,a]$ for some $z_1 \in L$ it follows that $u$ belongs to the subalgebra $\langle a,b\rangle$ of $L$ generated by $a,b$. Similarly, since  $y = v$ satisfies  the equation $[y,c]=[z_2,a]$ for some $z_2 \in L$ it follows that $v \in \langle a,c\rangle$.   Furthermore, if $x = u$ is a solution to $\Phi (x)$ then every poly-homogeneous component of $u$ with respect to each fixed degree of $a$ and $b$ is also a solution to $\Phi (x)$.  Therefore, we can assume that $u$ and $v$ are poly-homogeneous elements in $L$. 
Let us now look at the equation $[x,c]+[y,b]=[z,a]$. Let $x = u, y = v, z = w$ be a solution to this equation in $L$, where $u$ and $v$ are poly-homogeneous.
If $deg_b(u) = 0 $ then $u = \alpha a$, as required.  If each monomial in $x = u$ contains $b$ only once, then $u=[b,a^{(m)}]$. By Lemma \ref{le:ua},  $x=[b,a^{(2n+1)}]$ cannot be a solution of $\Phi (x)$, because there is no such $z$ that $[b,a^{(2n+1)},b]=[z,a].$ Hence, in this case  $u=[b,a^{(2m)}]$, as claimed. If each monomial in $x = u$ contains $b$ 
$k$ times, $k>1$, then the poly-homogeneous elements $[u,c]$ and $[v,b]$ are in different poly-homogeneous components of $L$ (since $[u,c]$ contains $b$ $k > 1$ times, while $[v,b]$ contains $b$ only once). Hence 
$w=t_1+t_2$ and 
$[u,c]=[t_1,a], [v,b]=[t_2,a]. $ It follows as above that $t_1 \in \langle a,c\rangle$  and $u$ does not contain $b$ - contradiction.  We showed that every homogeneous component of  a solution $x = u$ to the formula  $\Phi (x)$ is equal to $u=[b,a^{(2m)}]$ for some $m \in \mathbb{N}$.
Hence for every solution $x = u, y = v$ to the  formula $\Psi(x,y)$ one has $u = [b,f(a^2)]+\alpha a$ for some one-variable  polynomial $f$ and $\alpha \in K$.  A similar argument shows that $v = [c,g(a^2)] + \beta a$ for some polynomial $g$. To finish the proof we need to show that $v = [c,f(a^2)]+\beta a$ for some $\beta \in K$.

We already showed that the pair $x=[b,f(a^2)]+\alpha a, y =[c,f(a^2)]$
satisfies $\Psi$. Since the solution set of the formula $\Psi(x,y)$ is a linear subspace of $L$ one can subtract one pair of solutions from another pair, and the resulting pair is still a solution to $\Psi(x,y)$. It follows that the pair $x = 0, y = v- [c,f(a^2)]$ is also a solution to $\Psi(x,y)$, i.e., $x= 0,  y = [c,g(a^2)] +\beta a -  [c,f(a^2)]$ is a solution to $[x,c]+[y,b]=[z,a]$. Hence $[[c,g(a^2)] +\beta a -  [c,f(a^2)],b] = [z,a]$. Which implies that $[[c,g(a^2)] -  [c,f(a^2)],b] = [z^\prime,a]$ for $z^\prime  = z +\beta b$. It follows that  $[[c,(g(a^2)-f(a^2))],b] = [z^\prime,a]$.  Again, one can see that $z^\prime \in \langle a,b\rangle$, so the element $[z^\prime,a]$ does not contain $c$. However, the element  
$[[c,(g(a^2)-f(a^2))],b]$ is a linear combination of basis elements of the type $[[c,a^{(2m)}],b]$ which contain $c$. This may happen only if $[c,(g(a^2)-f(a^2))]  =0$, but then by Lemma \ref{le:faithful}  $g(a^2)-f(a^2) = 0$, as claimed. This proves the lemma.

 \end{proof}

\begin{theorem}\label{th3}
Let $L$ be a free Lie algebra of rank $> 2$  with coefficients in a field $K$. Then the polynomial ring $K[t]$  in one variable $t$ is e-interpretable in $L$.
\end{theorem}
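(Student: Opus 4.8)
The plan is to e-interpret $K[t]$ with domain the Diophantine set $S$ from Lemma~\ref{l1}, via the map $\pi\colon S\to K[t]$ defined by $\pi\big([b,f(a^2)]+\alpha a,\,[c,f(a^2)]+\beta a\big)=f$. By Lemma~\ref{l1}, $S$ is the truth set of the positive existential formula $\Psi$, so $S$ is e-definable in $L$; and $\pi$ is well defined and onto, since $[b,g(a^2)]\in Ka$ forces $g=0$ by the Hall-basis argument already used for Lemma~\ref{le:faithful}. It then remains to e-define in $L$ the $\pi$-preimages of equality, addition, multiplication, and the constants of $K[t]$ (and the action of $K$, which will come from Theorem~\ref{th:2}).

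Equality, addition and constants are straightforward. Since $\pi$ depends on a pair only modulo the line $Ka$ in the first coordinate, for $p_i=(x_i,y_i)\in S$ we get: $\pi(p_1)=\pi(p_2)$ iff $[x_1-x_2,a]=0$; $\pi(p_1)+\pi(p_2)=\pi(p_3)$ iff $[x_1+x_2-x_3,a]=0$; $\pi(p)=0$ iff $[x,a]=0$; $\pi(p)=1$ iff $[x-b,a]=0$; and the distinguished element $t$ corresponds to $[[b,a],a]$, so $\pi(p)=t$ iff $[x-[[b,a],a],a]=0$. Each is a single equation over $L$ with coefficients $a,b$. Finally, by Theorem~\ref{th:2} the scalar action of $K$ on $L$ is e-definable, and composing it with the equality clause gives an e-definition of the $K$-module structure of $K[t]$.

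The essential point is multiplication. The plan is to establish, for all $f,g\in K[t]$ and $\alpha,\beta\in K$, the congruence
\[
\big[\,[c,g(a^2)]+\beta a,\ [b,f(a^2)]+\alpha a\,\big]\ \equiv\ -\,[\,[b,(fg)(a^2)],\,c\,]\pmod{[L,a]},
\]
where $[L,a]:=\{[w,a]:w\in L\}$, by expanding the bracket (the terms carrying $\alpha$ or $\beta$ lie in $[L,a]$), using identity~\eqref{jacoby} of Lemma~\ref{le:principal} with $r=c$, summed over even powers of $a$, to get $[[c,g(a^2)],[b,f(a^2)]]\equiv[[c,(fg)(a^2)],b]$, and then using the antisymmetry-plus-\eqref{jacoby} identity from the proof of Lemma~\ref{l1} to rewrite $[[c,p(a^2)],b]\equiv-[[b,p(a^2)],c]$, all modulo $[L,a]$. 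Combined with the trivial $[\,[b,h(a^2)]+\gamma a,\,c\,]\equiv[[b,h(a^2)],c]\pmod{[L,a]}$, this yields, for $p_i=(x_i,y_i)\in S$ with $\pi(p_i)=f_i$,
\[
[y_2,x_1]+[x_3,c]\ \equiv\ [\,[b,(f_3-f_1f_2)(a^2)],\,c\,]\pmod{[L,a]}.
\]
Thus $f_1f_2=f_3$ implies the Diophantine condition $\exists w\,\big([y_2,x_1]+[x_3,c]=[w,a]\big)$.

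For the converse one needs the separation statement: if $[[b,m(a^2)],c]\in[L,a]$ for $m\in K[t]$, then $m=0$. I expect this is the only genuinely new auxiliary fact required, and it has a short proof. Embed $L$ into the free associative algebra $K\langle A\rangle$; an easy induction shows that in $[[b,a^{(2k)}],c]=[b,a^{(2k)}]c-c\,[b,a^{(2k)}]$ the coefficient of the word $b\,a^{2k}\,c$ equals $1$, while $[[b,a^{(2j)}],c]$ for $j\neq k$ is supported on words of length $2j+2\neq 2k+2$; hence the coefficient of $b\,a^{2k}\,c$ in $[[b,m(a^2)],c]$ equals the $k$-th coefficient of $m$. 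On the other hand every word occurring in $[w,a]=wa-aw$ either begins or ends with $a$, whereas $b\,a^{2k}\,c$ does neither, so that coefficient vanishes for every element of $[L,a]$; thus all coefficients of $m$ are zero. Granting this, $[y_2,x_1]+[x_3,c]\in[L,a]$ forces $f_3=f_1f_2$, so the displayed Diophantine condition e-defines the graph of multiplication. Assembling the clauses (each a system of equations, conjoined with $\Psi$ on the relevant tuples) shows $\pi$ is an e-interpretation of $K[t]$ in $L$. The main obstacle is exactly this multiplication clause: finding the correct ``product shape'' $[y_2,x_1]+[x_3,c]$ whose reduction modulo $\mathrm{ad}(a)(L)$ isolates $f_1f_2-f_3$ via the bilinear identities of Lemma~\ref{le:principal}, and then proving the separation statement that makes the implication reversible.
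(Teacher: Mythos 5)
Your proposal is correct and follows essentially the same route as the paper's: the domain is the Diophantine set from Lemma~\ref{l1} (the paper passes to the projection $X=\{[b,f(a^2)]+\alpha a\}$ and quotients by $\sim$, you work on $S$ directly), equality and addition are read off the first coordinate modulo $Ka$, and multiplication is encoded by the bracket identity $[[b,f(a^2)],[c,g(a^2)]]\equiv[[b,(fg)(a^2)],c]$ modulo $[L,a]$ coming from Lemma~\ref{le:principal}; your clause $[y_2,x_1]+[x_3,c]\in[L,a]$ is the paper's $[u,v']\sim[w,c]$ up to sign.

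Where you genuinely tighten the argument is in the equivalence used for the multiplication clause. The paper defines $u\sim v$ by the equation $[u-v,a]=0$, i.e.\ $u-v\in Ka$, but the forward implication of its Claim only produces a difference of the form $[s,a]$, which for nonconstant $f,g$ has degree $\geq 2$ and is not centralized by $a$; so the multiplication clause has to be read modulo $[L,a]$, with an extra existential witness exactly as you write it. This in turn requires the stronger separation fact that $[[b,m(a^2)],c]\in[L,a]$ forces $m=0$, whereas the paper's Hall-basis remark only rules out $[[b,m(a^2)],c]=\alpha a$. Your proof of the stronger statement, embedding $L$ in the free associative algebra $K\langle A\rangle$ and reading off the coefficient of the word $ba^{2k}c$ (which cannot occur in any $wa-aw$, yet occurs with coefficient $m_k$ in $[[b,m(a^2)],c]$), is correct and is precisely the missing ingredient that makes the backward implication of the multiplication clause go through.
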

\begin{proof}
By Lemma \ref{l1} the  positive existential formula
$$
\Psi (x,y)= \exists  z, z_1z_2 \phi(x,y,z,z_1,z_2)
$$
defines in $L$ the   set 
$$
S =  \{ ([b,f(a^2)]+\alpha a, [c,f(a^2)]+\beta a) \mid f \in K[t], \alpha, \beta \in K\}.
$$   
Hence the formula $\Phi (x) = \exists y \Psi(x,y)$ defines in $L$ the set
$$
X = \{[b,f(a^2)]+\alpha a \mid \alpha \in K, f \in K[t]\}.
$$
We define an equivalence relation $\sim$ on $X$ by the equation, namely for $u,v \in X$ 
$$
u \sim v  \Longleftrightarrow [u-v,a] = 0.
$$
For $u \in X$ by $[u]$ we denote the equivalence class of $u$ relative to $\sim$. 
 Since the centralizer of $a$ in $L$ is $Ka$ it follows that 
$$
u \sim v  \Longleftrightarrow u = v +\alpha a
$$
for some $\alpha \in K$.
 Note that $[[b,f(a^2)]+\alpha a] = [[b,f(a^2)]]$ for any $\alpha \in K$. From the above and Lemma \ref{le:ua} it follows that 
 $$
 [b,f(a^2)] +\alpha a \sim  [b,g(a^2)] +\beta a \Longleftrightarrow f(t) = g(t) \ in \ K[t],
 $$
so the equivalence class of $[b,f(a^2)]+\alpha a$ is completely defined by $f(t)$. Now we denote the equivalence class of $[b,f(a^2)]+\alpha a$ by $\bar f$. The map $f \to \bar f$ gives a bijection $K[t] \to X/ \sim$.   

Now we define an addition $\oplus$ on $X/\sim$ by an equation:
$$
[u] \oplus [v] = [w] \Longleftrightarrow w \sim u+v.
$$
Clearly, the map $^-$ preserves the addition, i.e., 
$$
\overline{f+g} = \bar f \oplus \bar g.
$$

Now we define a multiplication $\otimes$ on $X/\sim$. 

{\it Claim.} Let $f,g,h \in K[t]$. Then $fg = h$ in $K[t]$ if and only if 
\begin{equation} \label{eq:int-multipl}
[[b,f(a^2)],[c,g(a^2)]] \sim [[b,h(a^2)],c].
\end{equation}
Indeed, if $f(t)g(t)=h(t)$ then by (\ref{jacoby}) 
$$
[[b,f(a^2)],[c,g(a^2)]] =  [[b,h(a^2)],c] + [s,a],
$$
 so (\ref{eq:int-multipl}) holds. Conversely, suppose  (\ref{eq:int-multipl}) holds for some polynomials $f,g,h$. Then as above,  
  $$
  [[b,f(a^2)],[c,g(a^2)]]  \sim [[b,(fg)(a^2)],c],
  $$
so 
$$
[[b,(fg)(a^2)],c] \sim [[b,h(a^2)],c].
$$
It follows that 
$$
[[b,(fg -h)(a^2)],c] = \alpha a,
$$
 for some $\alpha \in K$.  But $[[b,(fg -h)(a^2)],c] $ is a linear combination of basis commutators of the type $[[b,a^{(2m)})],c]$, all of which contain $c$. It follows that $fg -h = 0$, as claimed.

We define multiplication $\bar f\otimes\bar g=\bar h$ if (\ref{eq:int-multipl}) holds. More precisely,  we put for $u,v,w \in X$ 

$$
[u] \otimes [v] = [w] \Longleftrightarrow \Psi(v,v^\prime) \wedge [u,v^\prime] \sim [w,c].
$$
Indeed, this is the same definition as above, since by Lemma  \ref{l1} if  $v = [b,f{(a^2)}]$ and $\Psi(v,v^\prime)$ holds  then $v^\prime  = [b,f{(a^2)}] +\alpha a$. In this case  
$$
[u,v^\prime] \sim  [[b,f(a^2)],[c,g(a^2)]],
$$
so $[u,v^\prime] \sim [w,c]$ shows that  (\ref{eq:int-multipl}) holds.

This proves that the ring of polynomials $K[t]$ in one variable $t$ is e-interpretable in $L$.

\end{proof}

\begin{theorem}\label{th:4}
Let $L$ be a free Lie algebra of rank $> 2$  with coefficients in a field $K$ of characteristic zero. Then the arithmetic $\mathbb{Z} = <\mathbb{Z} \mid +,\times,0,1>$ is e-interpretable in $L$.
\end{theorem}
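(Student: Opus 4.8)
The plan is to deduce the theorem from two facts already at our disposal. The first is Theorem~\ref{th3}, which gives $K[t]\to_e L$. The second is the classical theorem of Denef \cite{denef,denef2}: for a field $K$ of characteristic zero the ring of integers $\mathbb Z$, sitting inside $K[t]$ as the prime subring $\{n\cdot 1\mid n\in\mathbb Z\}$, is a Diophantine subset of $K[t]$. Granting these, it suffices to upgrade Denef's statement to an \emph{e}-interpretation $\mathbb Z\to_e K[t]$ and then apply transitivity of Diophantine interpretations (Lemma~\ref{le:transitive}): from $\mathbb Z\to_e K[t]$ and $K[t]\to_e L$ we obtain $\mathbb Z\to_e L$, which is the claim.

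To see that $\mathbb Z\to_e K[t]$ holds, take as interpreting set the prime subring $P=\{n\cdot 1\mid n\in\mathbb Z\}\subseteq K[t]$ (which in characteristic zero is isomorphic to $\mathbb Z$), and interpret the symbols $+,\times,0,1$ of the structure $\mathbb Z$ by the corresponding ring operations and constants of $K[t]$ restricted to $P$. Then the preimages of $"="$ and of the graphs of $+$ and $\times$ are cut out inside $K[t]$ by the single equations $x=y$, $x+y=z$ and $x\cdot y=z$, and the constants $0,1$ by $x=0$, $x=1$; so conditions (3) and (4) in the definition of an \emph{e}-interpretation are immediate, and the only substantive requirement is condition (2), that $P$ itself is Diophantine in $K[t]$. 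This is exactly Denef's theorem. Its mechanism is the polynomial Pell equation $x^{2}-(t^{2}-1)\,y^{2}=1$: one checks that the solutions $(x,y)\in K[t]^{2}$ are precisely the pairs $\pm\bigl(T_n(t),U_{n-1}(t)\bigr)$ arising from the Chebyshev polynomials, hence are indexed by $n\in\mathbb Z$; the index $n$ is then pinned down by an auxiliary Diophantine condition (for instance, via a divisibility by a power of $t-1$ one reads off the linear coefficient of $T_n(t)$ at $t=1$, which equals $n^{2}$, and then combines the data of two consecutive solutions to recover $n$ itself), so that $P$ becomes Diophantine. Characteristic zero enters twice: it is what lets $\mathbb Z$ embed in $K$ at all, and it guarantees that the Pell solution set is \emph{exactly} parametrized by $\mathbb Z$, with no extra solutions produced by the Frobenius as would happen in characteristic $p$.

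The only genuine obstacle is therefore the number-theoretic heart of the argument — the description of the $K[t]$-solutions of the polynomial Pell equation and the recovery of the index $n$ — but this is precisely the content of \cite{denef,denef2}, which we simply invoke. Everything else is routine: a restriction of a ring to a Diophantine subring is automatically an \emph{e}-interpretation, and composing it with the interpretation of Theorem~\ref{th3} via Lemma~\ref{le:transitive} finishes the proof. If one wishes, the composite interpretation $\mathbb Z\to_e K[t]\to_e L$ can be made explicit by pulling the Pell equation and the auxiliary equations back, through the dictionary established in the proof of Theorem~\ref{th3}, to finite systems of equations over $L$ whose unknowns range over the set $X=\{[b,f(a^{2})]+\alpha a\mid f\in K[t],\ \alpha\in K\}$, with the relation $\sim$ playing the role of equality.
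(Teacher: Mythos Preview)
Your argument is correct and follows exactly the paper's own route: invoke Theorem~\ref{th3} to get $K[t]\to_e L$, invoke Denef's theorem to get $\mathbb Z\to_e K[t]$ in characteristic zero, and compose via Lemma~\ref{le:transitive}. Your extra paragraph on the Pell-equation mechanism is pleasant elaboration, but the logical skeleton is identical to the paper's (much terser) proof.
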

\begin{proof}

By Denef's results \cite{denef},  (see more details in \cite{KM1}),  the arithmetic $\mathbb{Z} $ is e-interpretable in $K[t]$, so by transitivity of e-interpretations and Theorem \ref{th3},  $\mathbb{Z} $ is e-interpretable in  $L$, so the result follows.
\end{proof}

\begin{theorem} \label{th2}The Diophantine problem of a free Lie algebra of rank $>2$ over a field  is undecidable.
\end{theorem}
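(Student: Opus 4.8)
The plan is to derive this statement as an immediate consequence of the e-interpretability of the polynomial ring $K[t]$ in $L$ established in Theorem \ref{th3}, together with the classical undecidability of Diophantine equations over $K[t]$ and the reduction machinery of Lemmas \ref{le:reduction} and \ref{le:reductions}. All the substantive new work has in fact already been carried out in the preceding sections; what is left is to assemble the ingredients.

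First I would fix a free Lie algebra $L$ of rank $>2$ over a field $K$ and record, via Theorem \ref{th3}, that $K[t] \to_e L$, i.e.\ $K[t]$ is e-interpretable in $L$. By inspecting Lemma \ref{l1} and the proof of Theorem \ref{th3} one sees that this interpretation is \emph{effective} and uses only the constants $a,b,c$: the monomial $t^k$ is interpreted by the Hall basis element $[b,a^{(2k)}]$, an integer $m$ by the element $mb$, a general polynomial $\sum_k m_k t^k$ by $[b,f(a^2)]$ with $f=\sum_k m_k t^k$, and addition, multiplication and the equality relation on $K[t]$ are interpreted by the explicit equations displayed there. Next I would invoke Denef's theorems \cite{denef,denef2}: the Diophantine problem of a polynomial ring $K[t]$ over a field (indeed over any integral domain) is undecidable, the characteristic-zero case being \cite{denef} and the positive-characteristic case \cite{denef2}. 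Finally I would apply Lemma \ref{le:reductions}(1) with $\mathcal A=K[t]$ and $\mathcal M=L$: since the Diophantine problem of $K[t]$ is undecidable and $K[t]\to_e L$, the Diophantine problem of $L$ is undecidable. Unwinding Lemma \ref{le:reduction}, an algorithm for finite systems of equations over $L$ would, composed with the effective translation taking $S(X)=0$ to $S'(X')=0$, decide finite systems over $K[t]$ with coefficients in $\mathbb Z[t]$, contradicting Denef.

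I do not expect a genuine obstacle at this point: the difficult part was Lemma \ref{l1} (the exact computation of the Diophantine set cut out by the linear system $\phi$) and Theorem \ref{th3}. The only two points needing a moment's attention are (i) confirming that the interpretation of Theorem \ref{th3} is effective, which it is because the Hall basis of $L$ is computable and every witnessing equation is written out explicitly, and (ii) checking that the coefficients occurring in the translated system $S'=0$ are recognizable by a computer; they lie in the recursive subset $\{a,b,c\}\cup\{[b,f(a^2)] : f\in\mathbb Z[t]\}$ of $L$, so this causes no difficulty, and one may even phrase the conclusion as the undecidability of finite systems of equations over $L$ with coefficients among $a,b,c$ together with such polynomial expressions in $a,b$. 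When $K$ has characteristic zero one may alternatively quote Theorem \ref{th:4}, which gives the stronger assertion that $\mathbb Z$ is e-interpretable in $L$, and then apply Lemma \ref{le:reductions}(1) to the undecidable Diophantine problem of $\mathbb Z$.
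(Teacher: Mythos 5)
Your proposal is correct and follows exactly the paper's route: invoke Theorem \ref{th3} to get $K[t]\to_e L$, cite Denef \cite{denef,denef2} for undecidability of the Diophantine problem in $K[t]$, and conclude via Lemma \ref{le:reductions}(1). The extra remarks on effectiveness and recognizability of coefficients are sensible but not part of the paper's (equally terse) argument.
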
 
\begin{proof}
 By Theorem \ref{th3} the ring of polynomials $K[t]$ is e-interpretable in $L$. The Diophantine problem in $K[t]$ is undecidable (see \cite{denef,denef2}), so by Lemma \ref{le:reductions} the Diophantine problem in $L$ is undecidable.

\end{proof}

\end{document}